\newtheorem{theorem}{Theorem}[section]
\newtheorem{lemma}[theorem]{Lemma}
\newtheorem{corollary}[theorem]{Corollary}
  \newtheorem{remark}[theorem]{Remark}
\newenvironment{proof}{    
  \noindent
  \textbf{Proof.}}{
  \hfill $\Box$
  \vspace{3mm}
}
\numberwithin{equation}{section}
\newcommand{\N}{\mathbb{N}} 
\newcommand{\C}{\mathbb{C}} 
\newcommand{\D}{\mathbb{D}} 
\newcommand{\n}{|\!|\!|}
\DeclareMathOperator*{\ind}{ind}    
\begin{document}

\title{Invariant subspaces of the integration operators on H\"ormander algebras and Korenblum type spaces}

\author{Jos\'{e} Bonet and Antonio Galbis }

\date{}

\maketitle

\begin{abstract}
We describe the proper closed invariant subspaces of the integration operator when it acts continuously on countable
intersections and countable unions of weighted Banach spaces of holomorphic functions on the unit disc or the complex plane.
Applications are given to the case of Korenblum type spaces and H\"ormander algebras of entire functions.
\end{abstract}

\renewcommand{\thefootnote}{}
\footnotetext{\emph{2020 Mathematics Subject Classification.}
Primary: 47A15, 47B38, secondary: 46A04, 46A13, 46E10.}%
\footnotetext{\emph{Key words and phrases.} Integration operator, invariant subspaces, weighted spaces of holomorphic functions, Fr\'echet spaces, (LB)-spaces}
\footnotetext{This article is an improved version of the paper \cite{B}, which appeared in Arxiv, reference 2003.13573}


\section{Introduction.}

Let $G$ be the open unit disc $\D$ or the whole complex plane $\C$. We denote by $H(G)$ the Fr\'echet space of holomorphic functions on $G$, endowed with the topology of uniform convergence on compact subsets of $G$. A \textit{space $E$ of holomorphic functions on the domain $G$} is a Hausdorff locally convex space that is a subset of $H(G)$, such that the inclusion map $E \subset H(G)$ is continuous and $E$ contains the polynomials. If $E$ is a Fr\'echet space or a countable inductive limit of Banach spaces, by the closed graph theorem, the inclusion map $E \subset H(G)$ is continuous if and only if the point evaluations $E \rightarrow \C, f \rightarrow f(z)$ at all the points $z \in G$ are continuous on $E$. We are mainly interested in the case when the polynomials are dense in $E$. In this case then $E$ is separable.

Banach spaces of holomorphic functions on the unit disc $\D$ and on the complex plane $\C$ have been thoroughly investigated. We refer the reader for example to the books \cite{HKZ}, \cite{Z} and \cite{Z1}. H\"ormander algebras of entire functions \cite{A}, \cite{BG}, \cite{BLV}, \cite{BF}, \cite{MT}, as well as Koremblum space and other intersections and unions of growth Banach spaces of holomorphic functions on the unit disc \cite{ABR}, \cite{BLT}, \cite{HKZ}, are natural examples of (locally convex) spaces  of holomorphic functions. Vogt \cite{V13} proved that there are Fr\'echet spaces $E$ which are contained in $H(G)$ such that the inclusion $E \subset H(G)$ is not continuous.

In this note we study the set of proper closed invariant subspaces of the integration operator

$$
Jf(z):= \int_0^z f(\zeta) d \zeta, \ \ z \in G, \ \ f \in H(G),
$$
\noindent
when it acts continuously on Fr\'echet spaces or countable inductive limits of Banach spaces (called (LB)-spaces) $E$, which appear as countable intersections or unions of weighted Banach spaces of holomorphic functions on the unit disc or the complex plane. Recall that a subspace $M$ of a locally convex space $E$ is called invariant of a continuous linear operator $T:E \rightarrow E$ if $T(M) \subset M$. Our main results are Theorem \ref{invsubsdisc} and its Corollary \ref{koren}, which describe the invariant subspaces when the integration operator acts on Korenblum type spaces, and Theorem \ref{theorem_Ha}, which explains the situation in case of some H\"ormander algebras of entire functions. The proofs of these results depend on some abstract Theorems \ref{theorem_F} and \ref{theorem_LB} and they rely heavily on Theorems 3.8 and 3.16 due to Abanin and Tien \cite{AT17-2}. A different method permits us to handle the (LB)-algebra of entire functions of exponential type in Theorem \ref{exp}. An open question about the invariant subspaces of the integration operator on certain H\"ormander algebras is mentioned in Remark \ref{question}.

Abanin and Tien describe in \cite{AT17-2} the closed invariant subspaces of the integration operator on various scales of weighted Banach spaces of holomorphic functions. As mentioned above, some of their results are very important for our theorems below. We refer the reader to the introduction of \cite{AT17-2} for classical results about invariant subspaces of the integration operators and more recent ones in \cite{AK}, \cite{C} and \cite{CP}. The continuity of the integration operator on weighted Banach spaces of holomorphic functions was investigated by Harutyunyan and Lusky \cite{HL}; see also \cite{AT15} and \cite{AT18}. Other aspects, like spectrum and ergodic or dynamical properties, were considered by Beltr\'an, Fern\'andez and the first author in \cite{BBF13}. Similar questions for operators defined on H\"ormander algebras were investigated in \cite{BBF15}.

Our notation for functional analysis, in particular for locally convex spaces,  Fr\'echet spaces and (LB)-spaces, is standard. We refer the reader to \cite{BB}, \cite{J}, \cite{MV} and \cite{T}. If $E$ is a locally convex space, its topological dual is denoted by $E'$. The linear span of a subset $A$ of $E$ is denoted by ${\rm span}(A)$. The closure of a subset $A$ in $E$ is denoted by $\overline{A}$, and $\overline{A}^{E}$ in case the space in which the closure is taken must be emphasized. A subspace $M$ of $E$ is called \textit{proper} if $\{ 0 \} \neq M \neq E$.
In what follows, we set $\N_0 := \N \cup \{ 0 \}$.

\section{Results about subspaces of Fr\'echet or (LB)-spaces.}

\begin{lemma}\label{lemma_abstract_F}
Let  $ X := \mbox{\rm proj}_n X_n $ be a Fr\'echet space such that $ X = \bigcap_{ n \in \N} X_n  $ with each $ ( X_n, \| \cdot \| _n ) $
 a Banach space. Moreover, it is assumed that $ X $ is dense
 in $ X_n $ and that  $ X _{ n + 1} \subseteq X_n $ with a continuous inclusion for each $ n \in \N $. Let $M$ be a subspace of $X$. Then
\begin{itemize}
\item[(i)] $\overline{M}^{X} = \bigcap_{n \geq 1} \overline{M}^{X_n}$.
\item[(ii)] If $M$ is proper and closed, then there is $n(0) \in \N$ such that $\overline{M}^{X_n}$ is proper in $X_n$ for each $n \geq n(0)$.
\end{itemize}
\end{lemma}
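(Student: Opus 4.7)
The plan is to exploit the fact that the projective topology on $X$ is generated by the countable family of seminorms $\|\cdot\|_n$, which (by the continuity of the inclusions $X_{n+1}\subseteq X_n$) can be replaced by an increasing family without changing the topology. Consequently a neighborhood base of $0$ in $X$ consists of the single seminorm balls $U_{n,\varepsilon}=\{x\in X:\|x\|_n<\varepsilon\}$.

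For part (i), the inclusion $\overline{M}^{X}\subseteq\bigcap_n \overline{M}^{X_n}$ is immediate: continuity of $X\hookrightarrow X_n$ implies that closures in $X$ are sent into closures in $X_n$, so $\overline{M}^{X}\subseteq\overline{M}^{X_n}$ for every $n$. For the converse, I would take $x\in\bigcap_n\overline{M}^{X_n}$. Then in particular $x$ belongs to every $X_n$, hence to $X=\bigcap_n X_n$; and for any basic $X$-neighborhood $x+U_{n,\varepsilon}$, the hypothesis $x\in\overline{M}^{X_n}$ provides some $m\in M$ with $\|m-x\|_n<\varepsilon$, so $x\in\overline{M}^{X}$.

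For part (ii), the main step is a monotonicity principle: \emph{if $\overline{M}^{X_{n+1}}=X_{n+1}$ then $\overline{M}^{X_n}=X_n$}. To prove it, note that $X\subseteq X_{n+1}\subseteq X_n$ and, by hypothesis, $X$ is dense in $X_n$, so $X_{n+1}$ is dense in $X_n$. Since the $X_{n+1}$-norm dominates the $X_n$-norm on $X_{n+1}$, we have $\overline{M}^{X_{n+1}}\subseteq\overline{M}^{X_n}$ as subsets of $X_n$. If $\overline{M}^{X_{n+1}}=X_{n+1}$, then $X_{n+1}\subseteq\overline{M}^{X_n}$, and taking $X_n$-closures gives $X_n=\overline{X_{n+1}}^{X_n}\subseteq\overline{M}^{X_n}$. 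Equivalently, properness of $\overline{M}^{X_n}$ propagates upward in $n$.

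It remains to produce one index where properness holds. Since $M$ is proper and closed, $\overline{M}^{X}=M\neq X$. If $\overline{M}^{X_n}=X_n$ for every $n$, then by part~(i) $\overline{M}^{X}=\bigcap_n X_n=X$, a contradiction. Hence there exists some $n(0)\in\N$ with $\overline{M}^{X_{n(0)}}\subsetneq X_{n(0)}$, and by the monotonicity step $\overline{M}^{X_n}$ is proper for every $n\geq n(0)$. The only step requiring some care is the monotonicity claim; all other ingredients are essentially bookkeeping about how closures behave under continuous dense inclusions.
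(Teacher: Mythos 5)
Your proof is correct, and part (i) is essentially the paper's argument: the authors pick $y(n)\in M$ with $\|x-y(n)\|_n<1/n$ and let $n\to\infty$, which implicitly relies on the same normalization of the seminorm family to an increasing one that you make explicit; your version, which fixes one basic neighbourhood $x+U_{n,\varepsilon}$ at a time, is if anything slightly cleaner on that point. In part (ii) you genuinely diverge. The paper argues by contradiction: if no such $n(0)$ existed, there would be a cofinal subsequence $(n(k))_k$ with $\overline{M}^{X_{n(k)}}=X_{n(k)}$, and applying part (i) to the projective limit along that subsequence forces $M=\overline{M}^{X}=X$. You instead prove a monotonicity principle --- density of $M$ in $X_{n+1}$ forces density in $X_n$, because $X_{n+1}$ is dense in $X_n$ and the inclusion is continuous --- and then only need part (i) to produce a single index at which $\overline{M}^{X_{n(0)}}$ fails to be dense. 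Your monotonicity step is the one place in either argument where the hypothesis that $X$ is dense in each $X_n$ is actually used; the paper's subsequence trick gets by without it, so it is marginally more economical in its hypotheses, while yours makes the upward propagation of properness explicit and constructive. One cosmetic remark: since the paper's definition of a proper subspace also requires $M\neq\{0\}$, you should record, as the paper does, that $\{0\}\neq M\subseteq\overline{M}^{X_n}$ disposes of that half trivially.
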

\begin{proof}
(i) Since the inclusions $X \subset X_{n+1} \subset X_n$ are continuous for each $n \in \N$, we clearly have $\overline{M}^{X} \subset \bigcap_{n \geq m} \overline{M}^{X_n} = \bigcap_{n \geq 1} \overline{M}^{X_n}$ for each $m \in \N$.
Now, given $x \in \bigcap_{n \geq 1} \overline{M}^{X_n}$, we have $x \in \bigcap_{n \geq 1} X_n = X$. Moreover, for each $n \in \N$ there is $y(n) \in M$ such that
$\|x-y(n)\|_n < 1/n$. Then $x = \lim_{n \rightarrow \infty} y(n)$ in $X$ and $x \in \overline{M}^{X}$.

(ii) First of all, since $\{ 0 \} \neq M$, we get $\{ 0 \} \neq \overline{M}^{X_n}$ for each $n \in \N$. Proceeding by contradiction, suppose that there is an increasing sequence $(n(k))_k$ of natural numbers such that $\overline{M}^{X_{n(k)}} = X_{n(k)}$ for each $k \in \N$. Since $ X := \mbox{\rm proj}_k X_{n(k)} $, we could apply part (i) to conclude
$\overline{M}^{X} = X$. Since $M$ is closed by assumption, we have $M=X$, and $M$ would not be a proper subspace.
\end{proof}

\begin{lemma}\label{lemma_abstract_LB}
Let $ X = \mbox{\rm ind}_n X_n $ be an (LB)-space with increasing union of Banach spaces  $ X = \bigcup_{ n \in \N} X_n $. If $M$ is a proper closed
subspace of $X$, then there is $n(0) \in \N$ such that $M \cap X_n$ is a proper closed subspace of $X_n$ for each $n \geq n(0)$.
\end{lemma}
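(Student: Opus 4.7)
The plan is to argue, similarly to Lemma \ref{lemma_abstract_F}(ii), by contradiction that $M$ cannot avoid being a full subspace of $X_n$ along a cofinal sequence of indices, and separately to observe that $M \cap X_n$ is automatically closed in $X_n$ and is nonzero for large $n$.

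First, since $X$ is the (LB)-space $\mbox{\rm ind}_n X_n$, every inclusion $X_n \hookrightarrow X$ is continuous. Therefore, as $M$ is closed in $X$, its preimage $M \cap X_n$ is closed in $X_n$ for every $n \in \N$, with no further work. So the only content of the statement is that, for $n$ large enough, $M \cap X_n$ is a proper subspace of $X_n$, that is, $\{0\} \neq M \cap X_n \neq X_n$.

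Next I would handle the lower bound on $M \cap X_n$. Since $M \neq \{0\}$, pick any nonzero $x \in M \subset X = \bigcup_{n} X_n$. There exists $n_1 \in \N$ with $x \in X_{n_1}$, and then $x \in M \cap X_n$ for every $n \geq n_1$. Hence $M \cap X_n \neq \{0\}$ for all $n \geq n_1$.

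For the upper bound, suppose for contradiction that there is an increasing sequence $(n(k))_k$ of natural numbers with $M \cap X_{n(k)} = X_{n(k)}$ for each $k$, i.e., $X_{n(k)} \subset M$. Since $(n(k))_k$ is cofinal in $\N$ and the union $\bigcup_n X_n$ is increasing, we obtain
$$
X = \bigcup_{n \in \N} X_n = \bigcup_{k \in \N} X_{n(k)} \subset M,
$$
contradicting that $M$ is proper. Hence there exists $n_2 \in \N$ such that $M \cap X_n \neq X_n$ for all $n \geq n_2$. Setting $n(0) := \max\{n_1, n_2\}$ yields the conclusion. I don't foresee a real obstacle here; the key observation is just that in an (LB)-space the inclusion $X_n \hookrightarrow X$ is continuous (so closedness of $M \cap X_n$ is automatic) and that any cofinal family of $X_{n(k)}$ already exhausts $X$.
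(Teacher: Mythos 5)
Your proof is correct and follows essentially the same lines as the paper's: closedness of $M\cap X_n$ from continuity of the inclusions $X_n\hookrightarrow X$, and non-triviality from picking a nonzero $x\in M$. The only cosmetic difference is that for properness the paper argues directly (choose $y\in X\setminus M$ and an $n(0)$ with $y\in X_{n(0)}$) rather than by contradiction along a cofinal sequence, but the two arguments are interchangeable.
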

\begin{proof}
Since the inclusion $X_n \subset X$ is continuous for each $n \in \N$, the set $M \cap X_n$ is closed in $X_n$ for each $n \in \N$.
As $M$ is proper, there is $x \in M, \ x \neq 0$. There is $n(1) \in \N$ such that $x \in M \cap X_{n(1)}$. On the other hand, since $M \neq X$, there is $y \in X \setminus M$.
Select $n(0) \geq n(1)$ such that $y \in X_{n(0)}$. Then, if $n \geq n(0)$, we have $x \in M \cap X_{n}$ and $y \in X_n \setminus M$. Thus, $M \cap X_n$ is a proper subspace of $X_n$
\end{proof}

\section{Abstract results about invariant subspaces of the integration operator on  spaces of holomorphic functions.}

Our first Lemma is stated in page 412 of \cite{AT17-2}. We include a proof for the sake of completeness, because it is very relevant in our considerations below. As in \cite{AT17-2}, given a space of holomorphic functions $E$ on the open unit disc $\D$ or the complex plane $\C$, and $N \in \N$, we set
$$
A^{E}_N:=\{f \in E \ | \ f^{(j)}(0)=0, 0 \leq j < N \}.
$$

\begin{lemma}\label{lemma_form_subspaces}
Let $E$ be a space of holomorphic functions on the open domain $G=\D$ or  $G=\C$, such that the polynomials are dense in $E$. For each $N \in \N$ we have
$$
A^{E}_N = \overline{{\rm span}(\{z^j \ | \ j \geq N \})}.
$$
\end{lemma}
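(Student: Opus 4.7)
The plan is to prove the two inclusions separately, using only that the inclusion $E \subset H(G)$ is continuous, that polynomials are dense in $E$, and that coordinate functionals $\ell_j \colon f \mapsto f^{(j)}(0)$ are continuous on $E$ (they are continuous on $H(G)$ by standard Cauchy estimates, and the inclusion $E\subset H(G)$ is continuous by hypothesis, so they restrict to continuous functionals on $E$).

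For the inclusion $\supseteq$, the set $A^E_N$ is the intersection of the kernels of $\ell_0,\ldots,\ell_{N-1}$, hence a closed subspace of $E$. Since the monomials $z^j$ with $j\geq N$ all satisfy $\ell_k(z^j)=0$ for $0\leq k<N$, they lie in $A^E_N$; thus their closed linear span is contained in $A^E_N$.

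For the inclusion $\subseteq$, I fix $f\in A^E_N$. Using density of polynomials in $E$, I choose a sequence of polynomials $p_n\to f$ in $E$, writing $p_n(z)=\sum_{j=0}^{d_n}a_{n,j}z^j$. The continuous functionals $\ell_j$ give $a_{n,j}j!=\ell_j(p_n)\to \ell_j(f)=0$ for $0\leq j<N$. I then define
$$
q_n(z):=p_n(z)-\sum_{j=0}^{N-1}a_{n,j}z^j,
$$
so that $q_n\in \operatorname{span}(\{z^j : j\geq N\})$. The correction term is a finite linear combination of the fixed vectors $1,z,\ldots,z^{N-1}\in E$ with scalar coefficients $a_{n,j}\to 0$, so it converges to $0$ in $E$; therefore $q_n\to f$ in $E$, which places $f$ in the closed linear span of $\{z^j : j\geq N\}$.

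There is no substantive obstacle: the only subtlety worth noting is why each $\ell_j$ is continuous on $E$, which follows from the continuity of the inclusion $E\subset H(G)$ (hypothesis) together with continuity of the $j$-th derivative evaluation at $0$ on $H(G)$. The rest is a two-line argument: truncate the approximating polynomials at order $N$, and observe that the discarded initial segment tends to zero because each of its coefficients does and it lives in a fixed finite-dimensional subspace of $E$.
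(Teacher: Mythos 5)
Your proof is correct and follows essentially the same route as the paper: closedness of $A^E_N$ via continuity of the functionals $f\mapsto f^{(j)}(0)$ for one inclusion, and for the other, truncating an approximating sequence of polynomials and using that the discarded initial segment lives in the fixed finite-dimensional span of $1,z,\ldots,z^{N-1}$ and tends to zero there. The only cosmetic difference is that the paper deduces the vanishing of the initial segment from convergence in $H(G)$ together with the coincidence of topologies on a finite-dimensional subspace, while you argue directly from the continuity of the coordinate functionals; these are the same idea.
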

\begin{proof}
Fix $n \in \N$. Since the inclusion $E\subset H(G)$ is continuous, the evaluations $E \rightarrow \C, f \rightarrow f^{(j)}(0)$ are continuous for each $j \in \N$ and the set $A^{E}_N$ is closed. Moreover, it clearly contains $z^j$ for each $j \geq N$. Hence, $\overline{{\rm span}(\{z^j \ | \ j \geq N \})} \subset A^{E}_N$.

The Taylor expansion at the origin of a holomorphic function $f \in H(G)$ is denoted by $f(z) = \sum_{j=0}^{\infty} a_j(f) z^j$, for each $z \in G$.
Fix $f \in A^{E}_N.$ Since the polynomials are dense in $E$ by assumption, there is
a sequence of polynomials $(g_k)_k$ such that $\lim_{k \rightarrow \infty} g_k = f$ in $E$. This implies that $\lim_{k \rightarrow \infty} g_k = f$ in $H(G)$.
Therefore, $\lim_{k \rightarrow \infty} \sum_{j=0}^{N-1} a_j(g_k) z^j = 0$ in $H(G)$. The span of $\{1,z,...,z^{N-1} \}$ is finite dimensional, hence
$\lim_{k \rightarrow \infty} \sum_{j=0}^{N-1} a_j(g_k) z^j = 0$ in $E$. Consequently, $(g_k - \sum_{j=0}^{N-1} a_j(g_k) z^j)_k$ is a sequence of elements of
${\rm span}(\{z^j \ | \ j \geq N \})$ which converges in the topology of $E$ to $f$. Thus, $f \in \overline{{\rm span}(\{z^j \ | \ j \geq N \})}$.
\end{proof}

\begin{theorem}\label{theorem_F}
Let $ F := \mbox{\rm proj}_n F_n $ be a Fr\'echet space such that $ F = \bigcap_{ n \in \N} F_n  $ with each $F_n$ a Banach space of holomorphic functions
on the open domain $G=\D$ or  $G=\C$, such that the polynomials are contained in $F$ and dense in $F_n$ for each $n \in \N$.

Assume that the integration operator $J:F_n \rightarrow F_n$ is continuous for each $n \in \N$, and that, for each $n \in \N$, every proper closed
invariant subspace of $J$ on $F_n$ is of the form $A^{F_n}_K$ for some $K \in \N$. Then
\begin{itemize}
\item[(i)] $J:F \rightarrow F$ is continuous, and
\item[(ii)] Every proper closed invariant subspace for $J$ on $F$ is of the form $$A^{F}_K = \{f \in F \ | \ f^{(j)}(0)=0, 0 \leq j < K \}$$ for some $K \in \N$.
\end{itemize}
\end{theorem}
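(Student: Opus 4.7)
The plan for (i) is to invoke the universal property of the projective limit topology: since $F$ carries the initial topology from the inclusions $F\subset F_n$, continuity of $J:F\to F$ reduces to continuity of each composition $F\hookrightarrow F_n\xrightarrow{J} F_n$, which is given. That $J$ sends $F$ into itself is automatic because $Jf$ is computed in $H(G)$ independently of the ambient Banach space, and $f\in\bigcap_n F_n$ implies $Jf\in\bigcap_n F_n=F$.

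For (ii), I would first record that each $A^F_K$ with $K\geq 1$ is a proper closed $J$-invariant subspace of $F$: closedness follows from the continuity of $f\mapsto f^{(j)}(0)$ on $F$ (inherited from $H(G)$), properness from $1\notin A^F_K$ and $z^K\in A^F_K$, and invariance from $(Jf)^{(j)}(0)=f^{(j-1)}(0)$. For the converse, let $M$ be an arbitrary proper closed $J$-invariant subspace of $F$. First I apply Lemma~\ref{lemma_abstract_F}(ii) to obtain $n(0)\in\N$ such that $\overline{M}^{F_n}$ is proper in $F_n$ for every $n\geq n(0)$, and then check that each $\overline{M}^{F_n}$ is $J$-invariant in $F_n$, using $J(M)\subset M$ together with continuity of $J:F_n\to F_n$. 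The hypothesis on $F_n$ then delivers $K_n\in\N$ with $\overline{M}^{F_n}=A^{F_n}_{K_n}$ for every $n\geq n(0)$, and since the closures form a decreasing chain, Lemma~\ref{lemma_abstract_F}(i) gives
\[
M=\bigcap_{n\geq 1}\overline{M}^{F_n}=\bigcap_{n\geq n(0)} A^{F_n}_{K_n}.
\]

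It remains to pin down $(K_n)$. The inclusion $\overline{M}^{F_{n+1}}\subset\overline{M}^{F_n}$ reads $A^{F_{n+1}}_{K_{n+1}}\subset A^{F_n}_{K_n}$, and testing it on the polynomial $z^{K_{n+1}}\in F_{n+1}$ forces $K_n\leq K_{n+1}$, so $(K_n)$ is non-decreasing. If $(K_n)$ were unbounded, every $f\in M$ would satisfy $f^{(j)}(0)=0$ for all $j\in\N_0$, forcing $M=\{0\}$ and contradicting properness. Hence $(K_n)$ stabilises at some $K\geq 1$, and together with $\bigcap_n F_n=F$ this yields $M=A^F_K$. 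I expect the main delicate point to be precisely this monotonicity-and-boundedness analysis of the exponents $K_n$, since it is what transfers the classification available in each $F_n$ to a single classification in $F$; the remainder is routine bookkeeping based on Lemma~\ref{lemma_abstract_F}.
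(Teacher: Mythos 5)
Your proposal is correct and follows essentially the same route as the paper: reduce (i) to stepwise continuity via the projective limit, and for (ii) combine Lemma~\ref{lemma_abstract_F} with the classification in each $F_n$, then show the exponents are non-decreasing and bounded so that they stabilise. The only cosmetic difference is that you obtain monotonicity of the $K_n$ by testing the inclusion $A^{F_{n+1}}_{K_{n+1}}\subset A^{F_n}_{K_n}$ on the monomial $z^{K_{n+1}}$, whereas the paper normalises $k(n)$ as the minimal exponent with $z^{k(n)}\in\overline{M}^{F_n}$; both work.
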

\begin{proof}
(i) The continuity of $J:F_n \rightarrow F_n$ for each $n \in \N$ clearly implies  $J(F) \subset F$ and that $J:F \rightarrow F$ is continuous, because $F$ is the projective limit of the sequence of Banach spaces $(F_n)_n$.

(ii) It is easy to see that  $A^{F}_K$ is a closed invariant subspace of the integration operator $J$ on $F$; see for example Lemma \ref{lemma_form_subspaces}. Since $1 \notin A^{F}_K$ and $z^K \in A^{F}_K$, it follows
$A^{F}_K$ is a proper subspace of $E$.

Now, let $M$ be a proper closed subspace of $F$ which is invariant for the integration operator $J$. By Lemma \ref{lemma_abstract_F} (ii) there is $n(0) \in \N$ such that
$\overline{M}^{F_n}$ is proper (and closed) in $F_n$ for each $n \geq n(0)$. Moreover, it is invariant for $J$, since
$$J(\overline{M}^{F_n}) \subset \overline{J(M)}^{F_n} \subset \overline{M}^{F_n}.$$

By assumption, for each $n \geq n(0)$, there is $k(n) \in \N$ such that
$$
\overline{M}^{F_n} = A^{F_n}_{k(n)} = \overline{{\rm span}(\{z^j \ | \ j \geq k(n) \})}^{F_n}.
$$
Take, for each $n \in \N$, $k(n)$ as the minimum of those $j's$ such that $z^j \in \overline{M}^{F_n}$. Clearly $1 \leq k(n)$ for each $n \geq n(0)$. This selection implies, in particular,  that $z^{k(n)-1} \notin \overline{M}^{F_n}$. Since $\overline{M}^{F_{n+1}} \subset \overline{M}^{F_n}$ for each $n \in \N$, we have $1 \leq k(n(0)) \leq k(n) \leq k(n+1)$ for each $n \geq n(0)$.

We claim that the sequence $(k(n))_n$ of natural numbers must be bounded. If this is not the case, $\lim_{n \rightarrow \infty} k(n) = \infty$. For each $n \geq n(0)$ we have
$$
M \subset \overline{M}^{F_n} = \overline{{\rm span}(\{z^j \ | \ j \geq k(n) \})}^{F_n} \subset \{f \in H(G) \ | \ f^{(j)}(0)=0, 0 \leq j < k(n) \}.
$$
Therefore, the Taylor coefficients $(a_j(f))_{j=0}^{\infty}$ of each $f \in M$ must vanish; that is, $M=\{ 0 \}$, which is a contradiction, as $M$ is proper.

As a consequence of the claim that we have just proved, there are $K \in \N$ and  $n(1) \geq n(0)$ such that $k(n)=K$ for each $n \geq n(1)$. This implies
$$
\overline{M}^{F_n} = \{f \in F_n \ | \ f^{(j)}(0)=0, 0 \leq j < K \}
$$
for each $n \geq n(1)$. Thus, we may apply Lemma \ref{lemma_abstract_F} (i) to get
$$
M = \bigcap_{n \geq n(1)} \overline{M}^{F_n} = \{f \in F \ | \ f^{(j)}(0)=0, 0 \leq j < K \}.
$$
This completes the proof.
\end{proof}

\begin{theorem}\label{theorem_LB}
Let $ E = \mbox{\rm ind}_n E_n $ be an (LB)-space which is the increasing union of Banach spaces  $ E_n $ of holomorphic functions
on the open domain $G=\D$ or  $G=\C$, such that the polynomials are contained and dense in $E_n$ for each $n \in \N$.

Assume that the integration operator $J:E_n \rightarrow E_n$ is continuous for each $n \in \N$, and that, for each $n \in \N$, every proper closed
invariant subspace of $J$ on $E_n$ is of the form $A^{E_n}_K$ for some $K \in \N$. Then
\begin{itemize}
\item[(i)] $J:E \rightarrow E$ is continuous, and
\item[(ii)] Every proper closed invariant subspace of $J$ on $E$ is of the form $$A^{E}_K = \{f \in E \ | \ f^{(j)}(0)=0, 0 \leq j < K \}$$ for some $K \in \N$.
\end{itemize}
\end{theorem}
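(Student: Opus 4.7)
The plan is to mirror the argument of Theorem \ref{theorem_F}, swapping the projective language for an inductive one and using Lemma \ref{lemma_abstract_LB} in place of Lemma \ref{lemma_abstract_F}.

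For part (i), the continuity of $J:E\to E$ should be immediate from the universal property of the inductive limit: each $J:E_n\to E_n$ is continuous by assumption, so the compositions $E_n \hookrightarrow E \xrightarrow{?} E$ lift to a well-defined continuous map on $E = \mbox{\rm ind}_n E_n$. For part (ii), I would first verify that $A^E_K$ is indeed a proper closed invariant subspace of $J$: it is closed by continuity of point evaluations of derivatives (or, equivalently, by Lemma \ref{lemma_form_subspaces} applied to $E$), it is invariant because $J$ raises the order of vanishing at $0$ by one, and it is proper because $1\notin A^E_K$ while $z^K \in A^E_K$.

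Now let $M$ be a proper closed $J$-invariant subspace of $E$. Lemma \ref{lemma_abstract_LB} provides $n(0)\in \N$ such that $M\cap E_n$ is a proper closed subspace of $E_n$ for every $n\ge n(0)$; moreover $M\cap E_n$ is $J$-invariant because $J(M)\subset M$ and $J(E_n)\subset E_n$. The hypothesis then gives $k(n)\in \N$ with
\[
M\cap E_n = A^{E_n}_{k(n)}
\]
for every $n\ge n(0)$, and since $A^{E_n}_0 = E_n$ is not proper we may assume $k(n)\ge 1$. I would select $k(n)$ as the smallest index $j$ with $z^j\in M\cap E_n$. The key monotonicity observation, opposite in direction to the Fr\'echet case, is that the inclusion $E_n\subset E_{n+1}$ yields $M\cap E_n\subset M\cap E_{n+1}$, and so the polynomial $z^{k(n)}\in M\cap E_n$ also lies in $M\cap E_{n+1}=A^{E_{n+1}}_{k(n+1)}$, forcing $k(n+1)\le k(n)$. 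Thus $(k(n))_{n\ge n(0)}$ is a non-increasing sequence of positive integers, hence eventually constant: there exist $n(1)\ge n(0)$ and $K\in\N$ with $k(n)=K$ for all $n\ge n(1)$.

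It remains to identify $M$ with $A^E_K$. Since every $f\in M$ belongs to some $E_n$, and passing to a larger index if necessary we may take $n\ge n(1)$, we obtain
\[
M = \bigcup_{n\ge n(1)} (M\cap E_n) = \bigcup_{n\ge n(1)} A^{E_n}_K \subset A^E_K,
\]
the last inclusion being clear because each $A^{E_n}_K$ consists of functions whose first $K$ Taylor coefficients at the origin vanish. Conversely, given $f\in A^E_K$ I would pick $n\ge n(1)$ with $f\in E_n$; then $f\in A^{E_n}_K = M\cap E_n\subset M$, so $A^E_K\subset M$. I expect no real obstacle in this last step, since the algebraic structure of the union of Banach spaces matches perfectly the pointwise definition of $A^E_K$; the genuinely delicate point is the monotonicity argument for $k(n)$ together with the invocation of Lemma \ref{lemma_abstract_LB}, both of which replace the more elaborate argument about bounded Taylor coefficients needed in the projective case.
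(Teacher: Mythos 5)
Your proposal is correct and follows essentially the same route as the paper's proof: part (i) via the universal property of the inductive limit, and part (ii) via Lemma \ref{lemma_abstract_LB}, the choice of $k(n)$ as the minimal exponent with $z^{k(n)}\in M\cap E_n$, the monotonicity $k(n+1)\le k(n)$ forcing eventual constancy, and the identification $M=\bigcup_{n\ge n(1)}(M\cap E_n)=A^E_K$. The only difference is that you spell out the final double inclusion a bit more explicitly than the paper does, which is harmless.
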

\begin{proof}
(i) Clearly $J(E) \subset E$ and $J:E \rightarrow E$ is continuous since $ E = \mbox{\rm ind}_n E_n $ and $J$ is stepwise continuous by assumption.

(ii) The assumptions imply that the polynomials are dense in $E$. The subspaces $A^{E}_K$ are proper closed subspaces of $E$ which are invariant for the integration operator $J$.

We fix a proper closed subspace $M$ of $E$ which is invariant for $J$. By Lemma \ref{lemma_abstract_LB} there is $n(0) \in \N$ such that
$M \cap E_n$ is a proper closed subspace of $E_n$ for each $n \geq n(0)$. It is also invariant for $J$, since $J(M) \subset M$ and $J(E_n) \subset E_n$.
By our assumptions, for each $n \geq n(0)$ there is $k(n) \in \N$ such that
$$
M \cap E_n = A^{E_n}_{k(n)} = \overline{{\rm span}(\{z^j \ | \ j \geq k(n) \})}^{F_n}.
$$
We select, for each $n \in \N$, $k(n)$ as the minimum of those $j's$ such that $z^j \in M \cap E_n$. Since $M \cap E_n \subset M \cap E_{n+1}$ for each $n \in \N$, we have
$1 \leq k(n+1) \leq k(n) \leq k(n(0))$ for each $n \geq n(0)$. Then there are $L \in \N$ and $n(1) \geq n(0)$ such that $k(n)=L$ for each $n \geq n(1)$. This yields, for each $n \geq n(1)$,
$$
M \cap E_n = \{f \in E_n \ | \ f^{(j)}(0)=0, 0 \leq j < L \}.
$$
Therefore
$$
M = \bigcup_{n \geq n(1)} (M \cap E_n) = \{f \in E \ | \ f^{(j)}(0)=0, 0 \leq j < L \},
$$
and the proof is complete.
\end{proof}

\section{Invariant subspaces of the integration operator on Fr\'echet or (LB)-spaces of holomorphic functions on the disc. }\label{sectdisc}

Let us introduce some notation. We set $R= 1$ (for the case of holomorphic functions on $\D$) and $R= +\infty$ (for the case of entire functions).  A {\it weight} $v$   is a continuous function  $v: [0, R[ \to ]0,  \infty [$, which is decreasing on $[0,R[$ and satisfies $\lim_{r \rightarrow R} r^n v(r)=0$ for each $n \in \N$. We extend $v$ radially to $\D$ if $R=1$ and to $\C$ if $R=+\infty$ by $v(z):= v(|z|)$. For
such a weight $v$, we define the Banach space  $H_v^\infty$ of holomorphic
functions $f$ on the disc $\D$ (if $R=1$) or on the whole complex plane $\C$ (if $R=+\infty$) such that $\Vert f \Vert_v:= \sup_{|z| <R} v(z)|f(z)| <\infty$. For a holomorphic
function $f \in H(\{z \in \C ; |z| <R \})$ and $r <R$, we denote $M(f,r):= \max\{|f(z)| \ ; \ |z|=r\}$. Using the notation $O$ and $o$ of Landau, $f \in H_v^\infty$ if and only if $M(f,r)=O(1/v(r)), r \rightarrow R$. It is known that the closure of the polynomials in $H_v^\infty$ coincides with the Banach space $H_v^0$ of all those holomorphic functions on $\{z \in \C ; |z| <R \}$ such that $M(f,r)=o(1/v(r)), r \rightarrow R$, see e.g.\ \cite{BBG}. Spaces of type $H_v^\infty$ appear in the study of growth conditions of
holomorphic functions and have been investigated in various articles since the work of Shields and Williams, see e.g.\ \cite{BBG}, \cite{L2} and the references therein.
When we must specify the domain of definition of the holomorphic functions, we write $H_v^0(\D)$ or $H_v^0(\C)$.

We recall some examples of weights: For $R=1$,  $(i)$ $v_{\alpha}(r)=(1-r)^{\alpha}$ with $\alpha >0$, which are the \textit{standard weights} on the disc,
$(ii)$ $v(r)= \exp(-(1-r)^{-1})$, and $(iii)$ $v(z)=( \log	\frac{e}{1-r})^{-\alpha}, \ \alpha>0$. For $R=+\infty$, $(i)$ $v(r)= \exp(-r^p)$ with  $p > 0$,
$(ii)$ $v(r)= \exp( -\exp r)$,  and $(iii)$ $v(r)= \exp\big(- (\log^+ r  )^p\big) $, where $p \geq 2$ and $\log^+ r = \max(\log r,0)  $.

A systematic study of inductive and projective limits of weighted Banach spaces of type $H_v^\infty$ and $H_v^0$ was initiated by Bierstedt, Meise and Summers in \cite{BMS}.
If $\mathcal{V} = (v_n)_n$ is a decreasing sequence of weights on $G=\D$ or $G=\C$, we define the weighted inductive limit by $\mathcal{V}_0 H(G):=  \mbox{\rm ind}_n H^0_{v_n}(G)$.
These (LB)-spaces have been investigated by many authors; see e.g.\ \cite{AT18}, \cite{BB06} and \cite{BBG} and the references therein. On the other hand, if $\mathcal{A} = (a_n)_n$ is an increasing sequence of weights on $G=\D$ or $G=\C$, the weighted Fr\'echet space of holomorphic functions on $G$ is defined by $\mathcal{A}_0 H(G):=  \mbox{\rm proj}_n H^0_{a_n}(G)$. Fr\'echet spaces of this type were studied by E. Wolf \cite{W}. Concrete examples of spaces of this type in the case of entire functions appear in Section \ref{sectHoer}. The most relevant examples in the case of holomorphic functions on the disc are Koremblum type spaces, which we define now.

For each $\mu > 0$,  the growth Banach spaces of holomorphic functions are defined as $A^{-\mu}:=H^{\infty}_{v_{\mu}}$ and $A_0^{-\mu}:=H^{0}_{v_{\mu}}$ for the standard weight $v_{\mu}(r)=(1-r)^{\mu}$. These Banach  spaces play a relevant role in connection with interpolation and sampling of holomorphic functions;  see \cite[Section 4.3]{HKZ}. The \textit{spaces of holomorphic functions of Korenblum type} are defined as follows.

\begin{eqnarray*}
&& A_+^{-\gamma}:=\cap_{\mu>\gamma}A^{-\mu}=\{f\in H(\D)\colon \sup_{z\in \D}(1-|z|)^\mu|f(z)|<\infty\ \forall \mu>\gamma\}	,
\end{eqnarray*}
in which case also
\begin{eqnarray*}
&& A_+^{-\gamma} = \cap_{\mu>\gamma} A_0^{-\mu}.
\end{eqnarray*}
for each $\gamma \ge 0 $. And

\begin{eqnarray*}
 && A_-^{-\gamma}  :=  \cup_{\mu<\gamma}A^{-\mu}=\{f\in H(\D)\colon  \sup_{z\in \D}(1-|z|)^\mu|f(z)|<\infty \mbox{ for some } \mu < \gamma\}	
\end{eqnarray*}
in which case also
\begin{eqnarray*}
 A_-^{-\gamma}=\cup_{\mu<\gamma} A_0^{-\mu},
\end{eqnarray*}
for each $0<\gamma \le \infty $.

The space  $A_+^{-\gamma}$ is a Fr\'echet space when  endowed with the locally convex topology  generated by the  increasing
sequence of norms $\n  f \n_k:=\sup_{z\in \D}(1-|z|)^{\gamma+\frac{1}{k}}|f(z)|$, for $f \in A_+^{-\gamma}$ and each $k \in \N $.
Clearly, $A_+^{-\gamma}$ is a Fr\'echet space of holomorphic functions of type $\mathcal{A}_0 H(\D)$ for an increasing sequence $\mathcal{A}=(a_n)_n$.

Each space $ A_-^{-\gamma}$  is endowed with the finest locally convex topology such that all the natural inclusion maps
$A^{-\mu}\subset A_-^{-\gamma} $, for $ \mu<\gamma $,  are continuous. In particular,  $A_-^{-\gamma}:=\ind_k A^{-(\gamma -\frac{1}{k})}=\ind_k A_0^{-(\gamma -\frac{1}{k})}$
is an (LB)-space. Of course, the inductive limit is taken over all $k \in \N$ such that $(\gamma - \frac 1k) > 0 $.
The  \textit{Korenblum space} (see \cite{Kor}) $A_-^{-\infty}$, denoted $ A^{-\infty}$, is $A^{-\infty}=\ind_n A^{-n}$.
All these (LB)-spaces are weighted inductive limits of the form $\mathcal{V}_0 H(G)$ for a decreasing sequence $\mathcal{V}=(v_n)_n$.

\begin{theorem}\label{invsubsdisc}
(1) If $\mathcal{A}=(a_n)_n$ is an increasing sequence of weights on the unit disc $\D$, then the integration operator $J: \mathcal{A}_0 H(\D) \rightarrow \mathcal{A}_0 H(\D)$ is continuous and every proper closed invariant subspace of $J$ on the weighted Fr\'echet space $\mathcal{A}_0 H(\D)$ is of the form $$\{f \in \mathcal{A}_0 H(\D) \ | \ f^{(j)}(0)=0, 0 \leq j < K \}$$ for some $K \in \N$.

(2) If $\mathcal{V}=(v_n)_n$ is a decreasing sequence of weights on the unit disc $\D$, then the integration operator $J: \mathcal{V}_0 H(\D) \rightarrow \mathcal{V}_0 H(\D)$ is continuous and every proper closed invariant subspace of $J$ on the weighted (LB)-space $\mathcal{V}_0 H(\D)$ is of the form $$\{f \in \mathcal{V}_0 H(\D) \ | \ f^{(j)}(0)=0, 0 \leq j < K \}$$ for some $K \in \N$.
\end{theorem}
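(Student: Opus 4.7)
The plan is to deduce both parts as direct applications of the abstract Theorems \ref{theorem_F} and \ref{theorem_LB} respectively, where the Banach-space input is supplied by Theorems 3.8 and 3.16 of Abanin and Tien \cite{AT17-2}. In both cases the task reduces to checking three items at each Banach step: that polynomials are contained and dense, that $J$ acts continuously, and that the proper closed $J$-invariant subspaces have the prescribed form $A^{\cdot}_K$.

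For part (1), set $F_n := H^0_{a_n}(\D)$ and $F := \mathcal{A}_0 H(\D) = \bigcap_n F_n$. Since the sequence $(a_n)$ is increasing, the inclusions $F \subseteq F_{n+1} \subseteq F_n$ are continuous, and by the characterization of the little-oh weighted space as the closure of the polynomials in $H^\infty_{a_n}$ (see \cite{BBG}) the polynomials are contained and dense in every $F_n$, hence also in $F$. To ensure continuity of $J$ on each step, I would pass to an equivalent increasing sequence of weights, which yields the same Fréchet space $\mathcal{A}_0 H(\D)$ with the same topology, and for which each $a_n$ satisfies the Harutyunyan--Lusky type condition guaranteeing that $J:H^0_{a_n}(\D) \to H^0_{a_n}(\D)$ is continuous. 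Abanin and Tien's Theorem 3.8 then identifies every proper closed $J$-invariant subspace of $F_n$ with $A^{F_n}_K$ for some $K \in \N$. All hypotheses of Theorem \ref{theorem_F} being satisfied, its conclusion is precisely the statement of (1).

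For part (2), set $E_n := H^0_{v_n}(\D)$ and $E := \mathcal{V}_0 H(\D) = \bigcup_n E_n$. As $(v_n)$ is decreasing, the inclusions $E_n \subseteq E_{n+1}$ are continuous and the union is increasing, and once more polynomials are dense in each $E_n$. After replacing $(v_n)$ by an equivalent decreasing sequence of weights if necessary, continuity of $J:E_n \to E_n$ is in force at every step, and Theorem 3.16 of Abanin and Tien classifies the proper closed $J$-invariant subspaces of $E_n$ as $A^{E_n}_K$, $K \in \N$. Theorem \ref{theorem_LB} then yields the analogous classification on the (LB)-space $\mathcal{V}_0 H(\D)$.

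The principal technical point is the reduction to equivalent weight sequences for which the integration operator is continuous at each Banach level, since an arbitrary increasing or decreasing weight sequence need not satisfy the Harutyunyan--Lusky condition. Once this reduction is carried out, the remainder is a plug-and-play application of the Section 3 machinery, and the continuity of $J$ on the projective, respectively inductive, limit comes for free from its stepwise continuity, as recorded in part (i) of Theorems \ref{theorem_F} and \ref{theorem_LB}.
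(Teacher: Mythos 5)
Your overall architecture is exactly the paper's: feed the Abanin--Tien classification at the Banach level into Theorems \ref{theorem_F} and \ref{theorem_LB}. But the step you single out as ``the principal technical point'' --- replacing $(a_n)$ or $(v_n)$ by an equivalent weight sequence satisfying a Harutyunyan--Lusky type condition so that $J$ becomes continuous on each $H^0_{a_n}(\D)$ --- is both unjustified and unnecessary, and as written it is a genuine gap. You assert, without any argument, that every weight on the disc is equivalent to one for which such a condition holds; nothing in the paper or in the cited literature is invoked to support this, and it is not obvious. The point you are missing is that on the \emph{disc} no growth condition on the weight is needed at all: by \cite[Proposition 3.1]{AT17-2} (see also \cite{AT15}), the integration operator $J:H^0_v(\D)\to H^0_v(\D)$ is continuous --- in fact compact --- for \emph{every} weight $v$ on $\D$ in the sense of Section \ref{sectdisc}, essentially because $M(Jf,r)\le\int_0^r M(f,t)\,dt$ and the disc is bounded. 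The Harutyunyan--Lusky type hypotheses (e.g.\ $\liminf w'/w>1$) only become relevant for entire functions, which is why the paper needs them in Theorem \ref{theorem_Ha} but not here.

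A second, smaller inaccuracy: you cite Theorem 3.16 of \cite{AT17-2} for part (2). That theorem concerns weighted spaces of entire functions and is used later for the H\"ormander algebras; the classification of proper closed $J$-invariant subspaces of $H^0_v(\D)$ used in \emph{both} parts (1) and (2) is Theorem 3.8 of \cite{AT17-2}. Once you replace your equivalent-weights detour by the correct citation of \cite[Proposition 3.1 and Theorem 3.8]{AT17-2}, the rest of your argument --- density of the polynomials in each $H^0_{a_n}(\D)$ and $H^0_{v_n}(\D)$ via \cite{BBG}, continuity of the inclusions from the monotonicity of the weight sequence, and the plug-in to Theorems \ref{theorem_F} and \ref{theorem_LB} --- coincides with the paper's proof and is correct.
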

\begin{proof}
For each weight $v$ on the unit disc $\D$, it follows from  \cite[proposition 3.1 and Theorem 3.8]{AT17-2} (see also \cite{AT15}), that the integration operator $J:H_v^0(\D) \rightarrow H_v^0(\D)$ is continuous (even compact), and that every proper closed invariant subspace of $J$ on $H_v^0(\D)$ is of the form $$\{f \in H_v^0(\D) \ | \ f^{(j)}(0)=0, 0 \leq j < K \}$$ for some $K \in \N$.

(1) The integration operator on the Fr\'echet space $\mathcal{A}_0 H(\D) = \mbox{\rm proj}_n H^0_{a_n}(\D)$ satisfies all the hypothesis of Theorem \ref{theorem_F}. The conclusion follows from this result.

(2) Similarly, $J: \mathcal{V}_0 H(\D) \rightarrow \mathcal{V}_0 H(\D)$ satisfies the assumptions of Theorem \ref{theorem_LB}, which permits to complete the proof.

\end{proof}

We have the following consequence of Theorem \ref{invsubsdisc}.

\begin{corollary}\label{koren}
The integration operator $J$ is continuous on the Korenblum type spaces $E=A_+^{-\gamma}, \ \gamma \geq 0$ and $E= A_-^{-\gamma}, \ 0 < \gamma \leq \infty,$ and
every proper closed invariant subspace of $J$ on $E$ is of the form $A^{E}_K = \{f \in E \ | \ f^{(j)}(0)=0, 0 \leq j < K \}$ for some $K \in \N$.
\end{corollary}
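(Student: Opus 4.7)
The plan is to recognize each Korenblum type space as a concrete instance of one of the two families appearing in Theorem \ref{invsubsdisc} and then invoke that theorem directly; once the identification is made, both the continuity of $J$ and the description of its proper closed invariant subspaces follow immediately.

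First I would handle $E = A_+^{-\gamma}$ for $\gamma \geq 0$. For each $k \in \N$ set $a_k(r) := (1-r)^{\gamma + 1/k}$. Each $a_k$ is a continuous, strictly decreasing positive function on $[0,1)$ satisfying $\lim_{r \to 1} r^n a_k(r) = 0$, so it is a weight in the sense of Section \ref{sectdisc}. Since $\gamma + 1/k$ is decreasing in $k$ and $(1-r) \in [0,1)$, the sequence $(a_k)_k$ is pointwise increasing. Because $H^0_{a_k}(\D) = A_0^{-(\gamma + 1/k)}$ and the excerpt already records $A_+^{-\gamma} = \bigcap_{\mu > \gamma} A_0^{-\mu} = \bigcap_k A_0^{-(\gamma + 1/k)}$ with the Fréchet topology generated by the associated norms, the Fréchet space $A_+^{-\gamma}$ is precisely $\mathcal{A}_0 H(\D)$ for the increasing weight sequence $\mathcal{A} = (a_k)_k$. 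Part (1) of Theorem \ref{invsubsdisc} then delivers the conclusion for $A_+^{-\gamma}$.

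Next I would treat $E = A_-^{-\gamma}$ for $0 < \gamma < \infty$. Set $v_k(r) := (1-r)^{\gamma - 1/k}$, keeping only those $k$ with $\gamma - 1/k > 0$. Each $v_k$ is a weight on $\D$, and since $\gamma - 1/k$ is increasing in $k$, the sequence $(v_k)_k$ is pointwise decreasing. The paper's description $A_-^{-\gamma} = \mbox{\rm ind}_k A_0^{-(\gamma - 1/k)}$ then identifies $A_-^{-\gamma}$ with $\mathcal{V}_0 H(\D)$ for the decreasing sequence $\mathcal{V} = (v_k)_k$. The remaining Korenblum case, $\gamma = \infty$, corresponds to the definition $A^{-\infty} = \mbox{\rm ind}_n A^{-n}$, which is $\mathcal{V}_0 H(\D)$ for the decreasing weights $v_n(r) = (1-r)^n$. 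In either subcase, part (2) of Theorem \ref{invsubsdisc} completes the argument.

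There is no real obstacle beyond this bookkeeping: the nontrivial analytic content, namely the Abanin--Tien classification of proper closed $J$-invariant subspaces of $H^0_v(\D)$, has already been absorbed into Theorem \ref{invsubsdisc}. The only points worth double-checking are that each $a_k$ and $v_k$ satisfies $\lim_{r \to 1} r^n (1-r)^{\alpha} = 0$ for every $n$ (immediate for $\alpha > 0$), and that the two descriptions of each Korenblum space in the excerpt (via $A^{-\mu}$ and via $A_0^{-\mu}$) yield the same Fréchet or (LB) topology, which is standard and already used implicitly in the excerpt.
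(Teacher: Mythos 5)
Your proposal is correct and follows exactly the route the paper intends: the text preceding Theorem \ref{invsubsdisc} already records that $A_+^{-\gamma}$ is of the form $\mathcal{A}_0 H(\D)$ for the increasing weights $a_k(r)=(1-r)^{\gamma+1/k}$ and that $A_-^{-\gamma}$ (including $A^{-\infty}$) is of the form $\mathcal{V}_0 H(\D)$ for decreasing standard weights, so the corollary is just Theorem \ref{invsubsdisc} applied to these identifications. Your bookkeeping (monotonicity of the weight sequences, verification of the weight conditions, the $\gamma=\infty$ case) is accurate and matches the paper's implicit argument.
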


\section{Invariant subspaces of the integration operator on H\"or\-man\-der algebras.}\label{sectHoer}

A function $p: \C \rightarrow ]0, \infty [$ is called a \textit{weight exponent function} if it satisfies the following properties: (w1) $p$ is continuous and subharmonic, (w2) $p$ is radial, $p(z)=p(|z|), z \in \C,$ (w3) $\log(1 + |z|^2) = o(p(z))$ as $|z| \rightarrow \infty$; and (w4) $p$ is doubling, i.e.\ $p(2z)=O(p(z))$ as $|z| \rightarrow \infty$.

Given a weight $p$, we define the following weighted spaces of entire functions:

$$A_p(\C):=\{f\in {\mathcal H}(\C): \ {\rm there \ is} \ A>0: \sup_{z\in \C}|f(z)| \exp(-Ap(z))<\infty \},$$

\noindent
endowed with the inductive limit topology, for which it is an (LB)-space  and

$$A^0_p(\C):=\{f\in {\mathcal H}(\C): \ {\rm for \ all} \ \varepsilon>0: \sup_{z\in \C}|f(z)| \exp(-\varepsilon p(z))<\infty \},$$

\noindent
endowed with the projective limit topology, for which it is a Fr\'echet space.

These spaces are topological algebras. They are called H\"ormander algebras of entire functions. Clearly $A^0_p(\C) \subset A_p(\C)$. Condition (w3) implies that $A^0_p(\C)$ contains the polynomials, and condition (w4) implies that the spaces are stable under differentiation. Weighted algebras of entire functions of this type have been considered by many authors. See e.g.\ \cite{A}, \cite{BG}, \cite{BLV}, \cite{M} and \cite{MT} and the references therein.

Here are some examples: When $p(z)=|z|^s, \ s>0$, then $A_p(\C)$ consists of all entire functions of order $s$ and finite type or order less than $s$; and $A^0_p(\C)$ is the space of all entire functions of order at most $s$ and type $0$. For $s=1$, $A_p(\C)$ is the space of all entire functions of exponential type $Exp(\C)$, and $A^0_p(\C)$ is the space of  entire functions of infraexponential type.

It was proved by Beltr\'an, Fern\'andez and the first author in \cite[Lemma 4.1]{BBF15} that
the integration operator $J$ is continuous on $E=A_p(\C)$ and $E=A^0_p(\C)$ for every weight exponent $p$. Concerning invariant subspaces we have the following result.

\begin{theorem}\label{theorem_Ha}
Let $p$ be a differentiable weight exponent such that $\lim_{r \rightarrow \infty} p'(r) = \infty$. Then every proper closed invariant subspace of $J$ on $E=A^0_p(\C)$ or $E=A_p(\C)$ is of the form $$A^{E}_K = \{f \in E \ | \ f^{(j)}(0)=0, 0 \leq j < K \}$$ for some $K \in \N$.
This holds in particular for $p(z)=\alpha |z|^s, \ \alpha >0, s >1$, and for $p(z)=e^{|z|}$.
\end{theorem}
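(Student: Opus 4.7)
The plan is to reduce both cases to the abstract Theorems \ref{theorem_F} and \ref{theorem_LB} by realizing the H\"ormander algebras as projective, respectively inductive, limits of spaces of the form $H^0_v(\C)$ for suitable weights, and then to apply the classification of invariant subspaces of $J$ on a single $H^0_v(\C)$ due to Abanin--Tien \cite[Theorem 3.16]{AT17-2}.

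For the Fr\'echet case, I would set $v_n(z) := \exp(-p(z)/n)$. Each $v_n$ is continuous, radial, decreasing, and satisfies $\lim_{r \to \infty} r^k v_n(r) = 0$ for every $k \in \N$ thanks to condition (w3) on the weight exponent $p$. By definition of $A^0_p(\C)$, one gets $A^0_p(\C) = \bigcap_{n} H^0_{v_n}(\C)$ with matching topology, and the polynomials are dense in each $H^0_{v_n}(\C)$ by the general fact on little-oh spaces recalled at the beginning of Section \ref{sectdisc}. For the (LB)-case, I would set $w_n(z) := \exp(-n\,p(z))$. Because $w_{n+1}(z)/w_n(z) = \exp(-p(z)) \to 0$ as $|z| \to \infty$, every $f \in H^\infty_{w_n}(\C)$ satisfies $|f(z)|w_{n+1}(z) \to 0$, whence $H^\infty_{w_n}(\C) \subset H^0_{w_{n+1}}(\C)$; therefore
\[
A_p(\C) \;=\; \bigcup_{n \in \N} H^0_{w_{n+1}}(\C)
\]
as an (LB)-space of holomorphic functions, and again the polynomials are dense in each step.

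The remaining, and main, step is to verify for every $c>0$ that the integration operator $J$ is continuous on $H^0_{\exp(-cp)}(\C)$ and that every proper closed invariant subspace of $J$ on this Banach space has the form $A^{H^0_{\exp(-cp)}}_K$ for some $K \in \N$. The continuity is \cite[Lemma 4.1]{BBF15} applied at each step. The classification of invariant subspaces is precisely \cite[Theorem 3.16]{AT17-2}, whose geometric hypothesis on the weight translates, under the standing assumption that $p$ is differentiable and radial with $\lim_{r \to \infty} p'(r) = \infty$, into a condition that the weights $\exp(-cp)$ decay rapidly enough for $J$ to be compact and for every closed invariant subspace to be determined by a vanishing order at the origin. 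I expect this translation to be the main technical point: one has to check that the integrated quantity $\int_0^r \exp(-cp(t))\,dt$ that controls the norm of $J$ decays against $\exp(-cp(r))$ in the appropriate way, and this is exactly where $p'(r)\to \infty$ is used.

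Once these Banach space level statements are in place, Theorem \ref{theorem_F} applied to $F = A^0_p(\C) = \mbox{\rm proj}_n H^0_{v_n}(\C)$ yields the conclusion for $E = A^0_p(\C)$, while Theorem \ref{theorem_LB} applied to $E = A_p(\C) = \mbox{\rm ind}_n H^0_{w_{n+1}}(\C)$ yields it for $E = A_p(\C)$. The examples follow by computing $p'$: for $p(z) = \alpha|z|^s$ with $\alpha > 0, s > 1$ one has $p'(r) = \alpha s\, r^{s-1} \to \infty$ (note $s > 1$ is essential, since $s = 1$ would give a bounded derivative), and for $p(z) = e^{|z|}$ one has $p'(r) = e^r \to \infty$. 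Both weight exponents are clearly continuous, subharmonic, radial, satisfy (w3) and are doubling, so the general theorem applies.
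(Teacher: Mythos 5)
Your proposal follows essentially the same route as the paper: realize $A^0_p(\C)=\mbox{\rm proj}_n H^0_{\exp(-p/n)}(\C)$ and $A_p(\C)=\mbox{\rm ind}_n H^0_{\exp(-np)}(\C)$ and invoke Theorems \ref{theorem_F} and \ref{theorem_LB} together with the Abanin--Tien classification on each Banach step. The only substantive differences are that the paper pins down the ``translation'' you defer --- the relevant hypothesis of \cite[Theorem 3.8]{AT17-1} and of (the proof of) \cite[Theorem 3.16]{AT17-2} is $\liminf_{r \rightarrow \infty} w'(r)/w(r) > 1$ for $w=\exp(\beta p)$, which is immediate from $\beta p'(r) \rightarrow \infty$ --- and that the stepwise continuity of $J$ is taken from \cite{AT17-1} rather than from \cite[Lemma 4.1]{BBF15}, which only gives continuity on the algebras themselves, not on the individual weighted Banach spaces.
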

\begin{proof}
Define, for $\beta >0$, the weight $v_{\beta}(z):= \exp(-\beta p(z))$. The polynomials are dense in $H^0_{v_{\beta}}$ and, by \cite[Theorem 3.8]{AT17-1}, the  operator
$J: H^0_{v_{\beta}} \rightarrow H^0_{v_{\beta}}$ is continuous. Indeed,  $w_{\beta}(r):=1/v_{\beta}(r)=\exp(\beta p(r)), \ r \geq 0,$ satisfies
$$
\lim {\rm inf}_{r \rightarrow \infty} \frac{w'(r)}{w(r)} = \lim {\rm inf}_{r \rightarrow \infty} \beta p'(r) > 1,
$$
since $\lim_{r \rightarrow \infty} \frac{w'(r)}{w(r)} = \lim_{r \rightarrow \infty} \beta p'(r) = \infty$. Moreover, the latter fact implies by (the proof of) \cite[Theorem 3.16]{AT17-2} that every proper closed invariant subspace of $J$ on $H^0_{v_{\beta}}$ has the form $\{f \in H^0_{v_{\beta}} \ | \ f^{(j)}(0)=0, 0 \leq j < K \}$ for some $K \in \N$.

We have $A^0_p(\C) = \mbox{\rm proj}_n H^0_{v_{(1/n)}} $ and the conclusion follows from Theorem \ref{theorem_F} since all the hypothesis of this result hold.

Similarly, since $A_p(\C)= \mbox{\rm ind}_n H^0_{v_{n}} $  the result follows from Theorem \ref{theorem_LB} in this case.
\end{proof}

A different approach enables us to treat the case of the integration operator on the (LB)-algebra $Exp(\C)$ of entire functions of exponential type.

Let $\left(Bf\right)(z) = \frac{f(z)-f(0)}{z}$ denote the backward shift on $H({\mathbb C}).$ We recall that the norm in the Hardy space $H^2$ is given by
$$\|f\|_{H^2} = \sup_{0\leq r < 1}\big(\frac{1}{2\pi}\int_0^{2\pi}\left|f(re^{it})\right|^2\ dt\big)^{\frac{1}{2}}.$$

In what follows we denote
$$
g_\varepsilon (z):= g(\varepsilon z).$$

\begin{lemma}\label{cyclic}
Any transcendent function $f\in H({\mathbb C})$ is a cyclic vector for $B.$
\end{lemma}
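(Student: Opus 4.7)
My plan is to reduce the problem to a cyclicity statement in the classical Hardy space $H^2$ of the unit disc via the scaling $f_\varepsilon(z)=f(\varepsilon z)$, the link being the intertwining identity $B^k f_\varepsilon = \varepsilon^k (B^k f)_\varepsilon$, which follows from $Bg(z)=(g(z)-g(0))/z$ by a one-step induction. For every $\varepsilon>0$ the function $f_\varepsilon$ is bounded on $\D$, hence belongs to $H^\infty(\D)\subset H^2$, and it remains transcendental because $f$ is. Thus it suffices to prove that $f_\varepsilon$ is cyclic for $B$ in $H^2$ for every $\varepsilon>0$ and then pull the approximation back to $H(\C)$ by a change of variable.

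Next I would apply the Douglas--Shapiro--Shields characterisation: $g\in H^2$ is cyclic for $B$ if and only if $g$ admits no pseudocontinuation of bounded type to $\{z\in\C:|z|>1\}$. For $f_\varepsilon$ the only possible pseudocontinuation is the entire extension itself, since on each annulus $\{1<|z|<R\}$ the entire extension is bounded, hence of bounded type, and any two meromorphic functions of bounded type in $\{|z|>1\}$ sharing non-tangential boundary values on $\partial\D$ must agree (Privalov uniqueness). I would then invoke the classical fact that an entire function of bounded type in some $\{|z|>R\}$ is necessarily a polynomial, which follows by substituting $w=1/z$, applying Riemann's removable-singularity theorem to the numerator and denominator of a bounded-type factorisation, and then Liouville. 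Since $f_\varepsilon$ is transcendental, this rules out any pseudocontinuation of bounded type, so $f_\varepsilon$ is cyclic for $B$ in $H^2$.

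To transfer cyclicity to $H(\C)$ with the compact-open topology, I would fix $h\in H(\C)$ and a compact $K\subset\C$, and choose $\varepsilon>\max_{z\in K}|z|$. Then $h_\varepsilon\in H^\infty(\D)\subset H^2$, so there exist polynomials $q_n$ with $q_n(B)f_\varepsilon\to h_\varepsilon$ in $H^2$. The intertwining identity rewrites this as $(p_n(B)f)_\varepsilon\to h_\varepsilon$ for polynomials $p_n$ whose coefficients are those of $q_n$ multiplied by the appropriate powers of $\varepsilon$. Since $H^2$-convergence implies uniform convergence on every compact of $\D$, the substitution $w=\varepsilon z$ yields $p_n(B)f\to h$ uniformly on $K$, and a diagonal argument over an exhaustion of $\C$ produces a single sequence in $\mathrm{span}\{B^k f:k\ge 0\}$ converging to $h$ in the Fr\'echet topology of $H(\C)$. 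The step I expect to need to write most carefully is the uniqueness of the pseudocontinuation for functions analytic across $\partial\D$ such as $f_\varepsilon$, together with the ``bounded type at infinity implies polynomial'' lemma; once these two classical facts are recorded, the remainder of the argument is just the intertwining identity and a routine change of variable.
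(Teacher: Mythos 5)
Your proof is correct and follows essentially the same route as the paper: rescale $f$ so that the dilate lies in $H^2$, use the intertwining identity $B^k f_\varepsilon=\varepsilon^k(B^kf)_\varepsilon$ to reduce to cyclicity of the (still transcendental) dilate for the backward shift on $H^2$, and then transfer the $H^2$-approximation back to uniform convergence on compacta. The only difference is that the paper simply cites \cite[Theorem 2.2.4]{DSS} for the $H^2$ cyclicity of transcendental functions, whereas you re-derive it from the Douglas--Shapiro--Shields pseudocontinuation criterion (and use interior $H^2$ point-evaluation bounds instead of the paper's Cauchy-integral estimate for the transfer); both of these substitutions are sound.
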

\begin{proof}
According to \cite[Theorem 2.2.4]{DSS}, the restriction of any transcendent function to the unit circle is a cyclic vector for the backward shift on the Hardy space $H^2.$ We now fix a compact set $K\subset {\mathbb C}$ and $g\in H({\mathbb C}).$ Take $r > 0$ so that $|z|\leq r$ for all $z\in K$ and put $R = 2r.$ Since
$$
B^n\left(f_R\right) = R^n\left(B^n f\right)_R$$ we have
$$
g_R \in \overline{\mbox{span}}\left(\big(B^n f\right)_R:\ n\in {\mathbb N}\big),$$ where the closure is taken in $H^2.$ So, for every $\varepsilon > 0$ we can find coefficients $a_1, \ldots a_N$ such that
$$
\|g_R - \sum_{k=1}^N a_k \left(B^k f)_R\right)\|_{H^2} < \varepsilon.$$ For every $z\in K$ we have
$$
\begin{array}{*2{>{\displaystyle}l}}
\left|g(z) - \sum_{k=1}^N a_k \left(B^k f\right)(z)\right| & \leq \frac{R}{2\pi}\int_0^{2\pi}\frac{\left|g(Re^{it}) - \sum_{k=1}^N a_k \left(B^k f\right)(Re^{it})\right|}{\left|Re^{it}-z\right|}\ dt \\ & \\ & \leq \frac{2}{2\pi}\int_0^{2\pi}\left|g(Re^{it}) - \sum_{k=1}^N a_k \left(B^k f\right)(Re^{it})\right|\ dt\\ & \\ & \leq 2\|g_R - \sum_{k=1}^N a_k \left(B^k f)_R\right)\|_{H^2} < 2\varepsilon.\end{array}\ $$
\end{proof}

\begin{theorem}\label{exp}
Every proper closed invariant subspace of $J$ on $E = Exp\ ({\mathbb C})$ has the form $A^E_K = \{f\in E:\ f^{j}(0) = 0,\ 0\leq j < K\}$ for some $K\in {\mathbb N}.$
\end{theorem}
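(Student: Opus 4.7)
The plan is to use the Fourier--Borel (P\'olya--Martineau) duality $E' \cong H(\C)$ for $E := Exp(\C)$, under which the adjoint of $J$ is the backward shift $B$ on $H(\C)$. This reduces the problem to classifying the closed $B$-invariant subspaces of $H(\C)$, a task that Lemma \ref{cyclic} puts within reach.

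More explicitly, for $f(z)=\sum_n a_n z^n \in E$ and $g(z)=\sum_n b_n z^n \in H(\C)$ I would work with the pairing
$$\langle f, g \rangle := \sum_{n\ge 0} n!\, a_n b_n,$$
which converges absolutely since $|a_n|=O(R^n/n!)$ for some $R>0$ while $g$ is entire. By P\'olya--Martineau theory this realizes $H(\C)$ as the strong dual of the (DFS)-space $E$. A direct coefficient computation using $Jf=\sum_{n\ge 0}\frac{a_n}{n+1}z^{n+1}$ and $Bg=\sum_{n\ge 0} b_{n+1}z^n$ gives $\langle Jf, g\rangle = \sum_{m\ge 0} m!\, a_m b_{m+1} = \langle f, Bg \rangle$, so $J^{\ast}=B$ under this pairing.

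Next I would classify the proper closed $B$-invariant subspaces $N$ of $H(\C)$. If $N$ contained any transcendent function, Lemma \ref{cyclic} would force $N=H(\C)$, contradicting properness; so $N$ consists only of polynomials. If $p\in N$ has degree exactly $d$, then $p, Bp, \dots, B^d p$ are polynomials of the distinct degrees $d, d-1, \dots, 0$ and therefore linearly independent, so $N$ contains the whole space $\cP_d$ of polynomials of degree $\le d$. Since the polynomials are dense in $H(\C)$, $N$ cannot contain polynomials of arbitrarily large degree without equaling $H(\C)$; hence $N=\cP_{K-1}$ for some $K\in \N$.

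Finally I would transfer back via the bipolar theorem: for the Hausdorff space $E$, every closed subspace $M$ satisfies $M=(M^{\perp})_{\perp}$, and the annihilator map $M \mapsto M^{\perp}$ is an inclusion-reversing bijection between the closed subspaces of $E$ and those of $H(\C)$, preserving the respective invariance and preserving properness. A proper closed $J$-invariant subspace $M$ therefore corresponds to $M^{\perp}=\cP_{K-1}$ for some $K\ge 1$, and the identity $\langle f, z^j \rangle = j!\, a_j = f^{(j)}(0)$ yields $M=(\cP_{K-1})_{\perp} = A^E_K$, as required. The main technical obstacle is the topological dualization step --- verifying that the strong dual topology on $H(\C)$ induced from $E$ coincides with its natural Fr\'echet topology so that closed subspaces really correspond bijectively under annihilation. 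Granted this standard input from P\'olya--Martineau, the essence of the argument is Lemma \ref{cyclic} combined with the elementary $B$-orbit analysis on polynomials.
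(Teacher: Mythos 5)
Your proposal is correct and follows essentially the same route as the paper: the same Fourier--Borel pairing $\sum_k k!\,a_kb_k$, the same identification of the adjoint of $J$ with the backward shift $B$ on $H(\C)$, Lemma~\ref{cyclic} to exclude transcendent functions, and the bipolar theorem to return to $Exp(\C)$. The only (harmless) variation is in showing the polynomial subspace is some $\cP_{K-1}$: you argue directly that containing a degree-$d$ polynomial forces containment of $\cP_d$ and then invoke density of the polynomials, whereas the paper first deduces finite-dimensionality via Baire's theorem and then picks an element of maximal degree.
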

\begin{proof}
We consider the topological isomorphism (see for instance \cite[p.94]{T})
$$
\Phi:Exp\ ({\mathbb C})\to H\left({\mathbb C}\right)_b^\prime,\ \langle \Phi(g), f\rangle = \sum_{k=0}^\infty a_k b_k k!,$$ where
$$
f(z) = \sum_{k=0}^\infty a_k z^k,\ g(z) = \sum_{k=0}^\infty b_k z^k.$$ We identify $f\in H({\mathbb C})$ with $T_f\in H({\mathbb C})^{\prime\prime},\ T_f(u):= u(f).$ Then
$$
\Phi^t:H({\mathbb C})\to Exp\ ({\mathbb C})^\prime,\ \langle \Phi^t(T_f),g\rangle = \langle T_f, \Phi(g)\rangle = \Phi(g)(f).$$
\par\medskip
Let us assume that $M$ is a proper closed subspace of $Exp\ ({\mathbb C})$ satisfying $J(M)\subset M.$ Obviously  we also have $J^t(M^\circ)\subset M^\circ.$ We now check that
\begin{equation}\label{eq}
J^t\circ \Phi^t = \Phi^t \circ B.\end{equation} For $f\in H({\mathbb C}), f(z) = \sum_{k=0}^\infty a_k z^k$ and $g\in Exp\ ({\mathbb C}),\ g(z) = \sum_{k=0}^\infty b_k z^k,$ we have
$$
\langle \left(J^t\circ \Phi^t\right)(f), g\rangle = \langle f, \Phi(Jg)\rangle = \sum_{k=1}^\infty a_k\frac{b_{k-1}}{k}k! = \sum_{k=1}^\infty a_k b_{k-1}(k-1)!.$$ Also
$$
\langle \left(\Phi^t\circ B\right)(f), g\rangle = \langle B(f), \Phi(g)\rangle = \sum_{k=0}^\infty a_{k+1}b_kk!.$$ Identity (\ref{eq}) is proved. Consequently
$$
F:= \big(\Phi^t\big)^{-1}\left(M^\circ\right)$$ is a proper closed subspace of $H({\mathbb C})$ and satisfies $B(F)\subset F.$ According to Lemma \ref{cyclic}, $F$ cannot contain transcendent functions. Since $F$ is a closed subspace of the Fr\'echet space $H({\mathbb C})$ and it consists only of polynomials then an application of Baire's theorem permits us to conclude that $F$ is finite dimensional. Let $f_0(z) = \sum_{k=0}^{N-1}a_kz^k + z^N$ be an element of $F$ with the largest possible degree $N$. We observe that $(Bf_0)(z) = \sum_{k=0}^{N-2}a_{k+1}z^k + z^{N-1}.$ Then the polynomials
$$
f_0, Bf_0, B^2f_0,\ldots, B^N f_0$$ are linearly independent and we conclude
$$\mbox{span}\left\{f_0, Bf_0, B^2f_0,\ldots, B^N f_0\right\} = \mbox{span}\left\{1, z, z^2, \ldots, z^N\right\} = F.$$ Finally
$$
\begin{array}{*2{>{\displaystyle}l}}
M = M^{\circ\circ} & = \left\{g\in Exp\ ({\mathbb C})| \ \langle \Phi^t(f), g\rangle = 0\ \forall f\in F\right\} \\ & \\ & = \left\{g\in Exp\ ({\mathbb C})| \ \langle z^k, \Phi(g)\rangle = 0,\ 0\leq k\leq N\right\} \\ & \\ & = \left\{g\in Exp\ ({\mathbb C})| \ g^{(k)}(0) = 0,\ 0\leq k\leq N\right\}.\end{array}$$
\end{proof}

\begin{remark}\label{question}
{\rm The integration operator is continuous on all the H\"ormander algebras defined above $A^0_p(\C)$ and $ A_p(\C)$ by \cite[Lemma 4.1]{BBF15}. However, we do not know whether the conclusion of Theorem \ref{theorem_Ha} holds if the condition $\lim_{r \rightarrow \infty} p'(r) = \infty$ fails, in particular for the weight exponents $p(z)=|z|^s, 0 < s \leq 1$, except in the case covered by Theorem \ref{exp}. }
\end{remark}

\vspace{.3cm}

\textbf{Acknowledgement.} This research was partially supported by the projects  MTM2016-76647-P and GV Prometeo/2017/102.



\noindent \textbf{Authors' address:}%
\vspace{\baselineskip}%

Jos\'e Bonet: Instituto Universitario de Matem\'{a}tica Pura y Aplicada IUMPA,
Universitat Polit\`{e}cnica de Val\`{e}ncia,  E-46071 Valencia, Spain

email: jbonet@mat.upv.es \\

Antonio Galbis: Departament d'An\`alisi Matem\`atica,
Universitat de Val\`encia,
E-46100 Burjassot (Val\`encia), Spain

email: antonio.galbis@uv.es \\

\end{document}